\newtheorem{theorem}{Theorem}[section]
\newtheorem{corollary}[theorem]{Corollary}
\newtheorem{definition}[theorem]{Definition}
\newtheorem{example}[theorem]{Example}
\newtheorem{lemma}[theorem]{Lemma}
\newtheorem{proposition}[theorem]{Proposition}
\newenvironment{proof}[1][Proof]{\noindent\textbf{#1.} }{\ \rule{0.5em}{0.5em}}
\begin{document}

\title{\bigskip G-RADICAL SUPPLEMENTED\ MODULES}
\author{Celil Nebiyev \\
\\
$Department$ $of$ $Mathematics,Ondokuz$ $May\imath s$ $University,$\\
$55270$ $Kurupelit-Atakum$/$Samsun$/$T\ddot{U}RK\dot{I}YE-TURKEY$\ \ \\
$cnebiyev@omu.edu.tr$}
\date{ \qquad }
\maketitle

\begin{abstract}
In this work, g-radical supplemented modules which is a proper
generalization of g-supplemented modules are defined and some properties of
these modules are investigated. It is proved that the finite sum of
g-radical supplemented modules is g-radical supplemented. It is also proved
that every factor module and every homomorphic image of a g-radical
supplemented module is g-radical supplemented. Let $R$ be a ring. Then $%
_{R}R $ is g-radical supplemented if and only if every finitely generated $R$%
-module is g-radical supplemented. In the end of this work, it is given two
examples for g-radical supplemented modules seperating with g-supplemented
modules.
\end{abstract}

\date{\textbf{Key words:} Small Submodules, Radical, Supplemented Modules,
Radical (Generalized) Supplemented Modules.\\
\ \\
\textbf{2010 Mathematics Subject Classification:} 16D10, 16D70.}

\section{INTRODUCTION}

In this paper, all rings are associative with identity and all modules are
unital left modules.

Let $M$ be an $R$ -module and $N$ $\leq M$. If $L=M$ for every submodule $L$
of $M$ such that $M=N+L$, then $N$ is called a small submodule of $M$,
denoted by $N<<M$. Let $M$ be an $R$ -module and $N\leq M$. If there exists
a submodule $K$ of $M$ such that $M=N+K$ and $N\cap K=0$, then $N$ is called
a direct summand of $M$ and it is denoted by $M=N\oplus K$. For any module $%
M,\ $we have $M=M\oplus 0$. The intersection of all maximal submodules of an 
$R$-module $M$ is called the radical of $M$ and denoted by $RadM$. If $M$
have no maximal submodules, then we call $RadM=M$. A submodule $N$ of an $R$
-module $M$ is called an essential submodule and denoted by $%
N\trianglelefteq M$ in case $K\cap N\neq 0$ for every submodule $K\neq 0$.
Let $M$ be an $R$ -module and $K$ be a submodule of $M$. $K$ is called a
generalized small submodule of $M$ denoted by $K<<_{g}M$ if for every
essential submodule $T$ of $M$ with the property $M=K+T$ implies that $T=M$.
It is clear that every small submodule is a generalized small submodule but
the converse is not true generally. Let $M$ be an $R-$module. $M$ is called
an hollow module if every proper submodule of $M$ is small in $M$. $M$ is
called local module if $M$ has a largest submodule, i.e. a proper submodule
which contains all other proper submodules. Let $U$ and $V$ be submodules of 
$M$. If $M=U+V$ and $V$ is minimal with respect to this property, or
equivalently, $M=U+V$ and $U\cap V<<V$, then $V$ is called a supplement of $%
U $ in $M$. $M$ is called a supplemented module if every submodule of $M$
has a supplement in $M$. Let $M$ be an $R$-module and $U,V\leq M$. If $M=U+V$
and $M=U+T$ with $T\trianglelefteq V$ implies that $T=V$, or equivalently, $%
M=U+V$ and $U\cap V\ll _{g}M$, then $V$ is called a g-supplement of $U$ in $%
M $. $M$ is called g-supplemented if every submodule of $M$ has a
g-supplement in $M$. The intersection of maximal essential submodules of an $%
R$-module $M$ is called a generalized radical of $M$ and denoted by $%
Rad_{g}M $. If $M$ have no maximal essential submodules, then we denote $%
Rad_{g}M=M.$

\begin{lemma}
\label{1}Let $M$ be an $R$ -module and $K,L,N,T\leq M$. Then the followings
are hold. $\left[ \text{\ref{1.3},\ref{1.4}}\right] $

$\left( 1\right) $ If $K\leq N$ and $N$ is generalized small submodule of $M$%
, then $K$ is a generalized small submodule of $M$.

$\left( 2\right) $ If $K$ is contained in $N$ and a generalized small
submodule of $N$, then $K$ is a generalized small submodule in submodules of 
$M$ which contains submodule $N$.

$\left( 3\right) $ Let $f:M\rightarrow N$ be an $R$ -module homomorphism. If 
$K\ll _{g}M$ , then $f\left( K\right) \ll _{g}M$.

$\left( 4\right) $ If $K\ll _{g}L$ and $N\ll _{g}T$, then $K+N\ll _{g}L+T$.
\end{lemma}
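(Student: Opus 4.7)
The plan is to prove the four parts in the order stated, since each later part will lean on the earlier ones. The underlying idea in every part is the same: translate a claim about generalized smallness into a statement about an essential submodule $T$ with $K+T=M$ (or the analogous module), and then exploit the hypothesis to force $T$ to be the whole module. I expect everything to be elementary once the right essential submodule is identified; the only slightly delicate point is keeping track of essentiality when we pull submodules back or push them forward along inclusions and homomorphisms.

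For part (1), I would take an essential $T\le M$ with $M=K+T$ and observe that since $K\le N$ we also have $M=N+T$; because $N\ll_g M$, this forces $T=M$. For part (2), suppose $N\le L\le M$ and let $T\trianglelefteq L$ satisfy $L=K+T$. I would use the modular law together with $K\le N$ to write $N=N\cap L=K+(N\cap T)$, note that $N\cap T$ is essential in $N$ because $T$ is essential in $L\supseteq N$, and then apply $K\ll_g N$ to conclude $N\cap T=N$, hence $N\le T$; combined with $L=K+T$ this gives $L=T$.

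For part (3), the natural move is to pull a candidate $T\trianglelefteq N$ with $N=f(K)+T$ back to $f^{-1}(T)\le M$. A short computation shows $M=K+f^{-1}(T)$, and I would verify that $f^{-1}(T)$ is essential in $M$: if $L\le M$ meets $f^{-1}(T)$ trivially, then $f(L)\cap T=0$, which forces $f(L)=0$ (as $T\trianglelefteq N$), so $L\le \ker f\le f^{-1}(T)$ and hence $L=0$. Now $K\ll_g M$ gives $f^{-1}(T)=M$, so $N=f(M)\le T$, and we are done. The main obstacle here is getting the essentiality of $f^{-1}(T)$ right, which is why I isolated that sub-argument.

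For part (4), I would first use part (2) to promote both $K\ll_g L$ and $N\ll_g T$ to $K\ll_g L+T$ and $N\ll_g L+T$, reducing the problem to showing that the sum of two generalized small submodules of a fixed module is generalized small. Given an essential $E\trianglelefteq L+T$ with $L+T=(K+N)+E$, I would regroup as $L+T=K+(N+E)$; since $N+E\supseteq E$ remains essential in $L+T$, the hypothesis $K\ll_g L+T$ forces $L+T=N+E$, and a second application to $N\ll_g L+T$ then yields $E=L+T$.
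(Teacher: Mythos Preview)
The paper does not actually prove this lemma; it merely cites the references labelled \ref{1.3} and \ref{1.4}, so there is no argument of the paper's own to compare against. Your proofs of parts (1), (2), and (4) are correct and are the standard arguments one would expect.

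For part (3) there is one small slip. After obtaining $f^{-1}(T)=M$ you write ``so $N=f(M)\le T$'', which presumes $f$ is surjective; nothing in the hypothesis guarantees this. The repair is immediate: from $f^{-1}(T)=M$ you get $f(K)\subseteq f(M)\subseteq T$, and then $N=f(K)+T=T$ as required. (Incidentally, the paper's statement of (3) reads ``$f(K)\ll_{g}M$'', which is almost certainly a typo for ``$f(K)\ll_{g}N$''; you proved the latter, which is the version actually used later in the paper, e.g.\ in the proof of Lemma~\ref{8}.)
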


\begin{corollary}
Let $M_{1},M_{2},...,M_{n}\leq M$, $K_{1}\ll _{g}M_{1}$, $K_{2}\ll _{g}M_{2}$%
, ..., $K_{n}\ll _{g}M_{n}$. Then $K_{1}+K_{2}+...+K_{n}\ll
_{g}M_{1}+M_{2}+...+M_{n}$.
\end{corollary}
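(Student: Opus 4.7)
The plan is a straightforward induction on $n$, leveraging Lemma \ref{1}(4) as the engine.

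For the base case $n=1$ the statement is tautological, and for $n=2$ it is exactly Lemma \ref{1}(4): from $K_1 \ll_g M_1$ and $K_2 \ll_g M_2$ we conclude $K_1 + K_2 \ll_g M_1 + M_2$.

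For the inductive step, I would assume the statement holds for $n-1$, so that
\[
K_1 + K_2 + \cdots + K_{n-1} \ll_g M_1 + M_2 + \cdots + M_{n-1}.
\]
Setting $L := M_1 + \cdots + M_{n-1}$ and $K := K_1 + \cdots + K_{n-1}$, I now have $K \ll_g L$ and $K_n \ll_g M_n$. A second application of Lemma \ref{1}(4) yields $K + K_n \ll_g L + M_n$, i.e.
\[
K_1 + K_2 + \cdots + K_n \ll_g M_1 + M_2 + \cdots + M_n,
\]
which closes the induction.

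There is no real obstacle here; the whole content sits in Lemma \ref{1}(4), and the corollary is simply its iteration. The only point worth double-checking is that the associativity used when forming the partial sums $K_1 + \cdots + K_{n-1}$ and $M_1 + \cdots + M_{n-1}$ is compatible with how Lemma \ref{1}(4) is stated, but since submodule addition is associative this is immediate.
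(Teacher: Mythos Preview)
Your induction argument is correct and is exactly the intended route: the paper states this corollary immediately after Lemma~\ref{1} without proof, treating it as an obvious iteration of part~(4). There is nothing to add.
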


\begin{corollary}
Let $M$ be an $R$ -module and $K\leq N\leq M$ . If $N\ll _{g}M$ , then $%
N/K\ll _{g}M/K$.
\end{corollary}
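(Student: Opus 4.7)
The plan is to recognize this corollary as an immediate consequence of part $(3)$ of Lemma \ref{1}, specialized to the canonical projection. The only work is to pick the right homomorphism and read off the image.

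First, I would define $\pi: M \rightarrow M/K$ to be the natural epimorphism given by $\pi(m) = m + K$ for every $m \in M$; this is an $R$-module homomorphism, which is all that is needed to invoke Lemma \ref{1}(3). Since $K \leq N$, the image of $N$ under $\pi$ is precisely $\pi(N) = N/K \leq M/K$.

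Next, since by hypothesis $N \ll_g M$, Lemma \ref{1}(3) applied to $\pi$ gives at once $\pi(N) \ll_g \pi(M)$. Because $\pi$ is surjective we have $\pi(M) = M/K$, and therefore $N/K = \pi(N) \ll_g M/K$, which is exactly the conclusion we want.

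The statement has no real obstacle: essentially the whole content is already packaged in part $(3)$ of Lemma \ref{1}, together with the fact that the projection modulo a submodule contained in $N$ sends $N$ onto $N/K$. The only thing to double-check is that the codomain in Lemma \ref{1}(3) is the image module (so that ``$f(K) \ll_g N$'' is read in the target rather than the domain), which is the natural interpretation in this setting and is consistent with the way the lemma is used in the preceding corollary on sums.
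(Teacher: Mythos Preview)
Your proof is correct and is exactly the argument the paper intends: the corollary is stated without proof immediately after Lemma~\ref{1}, and deriving it from part~$(3)$ via the canonical projection $\pi:M\to M/K$ is the natural reading. Your remark about the codomain in Lemma~\ref{1}$(3)$ is also well taken---the printed statement has $f(K)\ll_g M$, but the only sensible interpretation (and the one you use) is $f(K)\ll_g N$.
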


\begin{corollary}
Let $M$ be an $R$ -module, $K\ll _{g}M$ and $L\leq M$. Then $\left(
K+L\right) /L\ll _{g}M/L$.
\end{corollary}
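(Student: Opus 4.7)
The plan is to recognize this as a direct specialization of Lemma \ref{1}(3) to the natural projection, so the bulk of the work is just identifying the right homomorphism and its image.

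First I would introduce the canonical epimorphism $\pi : M \longrightarrow M/L$, $\pi(m) = m+L$, and verify the standard identification
\[
\pi(K) \;=\; \{\,k+L : k\in K\,\} \;=\; (K+L)/L,
\]
which is immediate from the definition of the quotient and holds in $M/L$. Once this is in hand, the hypothesis $K \ll_{g} M$ feeds directly into Lemma \ref{1}(3) (read with the obvious correction that $f(K)\ll_{g} N$ for a homomorphism $f:M\to N$), applied to $f=\pi$ and $N=M/L$, yielding $(K+L)/L = \pi(K) \ll_{g} M/L$, which is exactly the claim.

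The main obstacle, such as it is, lies only in confirming the image computation $\pi(K)=(K+L)/L$; everything else is a citation of the preceding lemma. If one preferred to avoid invoking part (3), a direct argument is equally short: suppose $T/L \trianglelefteq M/L$ (with $L\le T\le M$) and $M/L = (K+L)/L + T/L$. By the correspondence of essential submodules under $\pi$ the preimage $T$ is essential in $M$, and the sum condition lifts to $M=K+T$; since $K\ll_{g} M$ this forces $T=M$, hence $T/L = M/L$, establishing $(K+L)/L\ll_{g} M/L$. Either route completes the proof in a few lines, and I would go with the homomorphism route as it is cleaner and matches the pattern of the preceding two corollaries.
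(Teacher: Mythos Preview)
Your proposal is correct and matches the paper's intent: the paper gives no explicit proof of this corollary, presenting it as an immediate consequence of Lemma~\ref{1}, and your application of part~(3) to the canonical projection $\pi:M\to M/L$ (with the obvious typo fix $f(K)\ll_{g}N$) is precisely the intended one-line argument, in line with the preceding corollaries. Your optional direct verification is also fine but unnecessary here.
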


\begin{lemma}
\label{2}Let $M$ be an $R$-module. Then $Rad_{g}M=\sum_{L\ll _{g}M}L.$
\end{lemma}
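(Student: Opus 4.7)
The plan is to prove both inclusions separately, following the template of the classical identity $\operatorname{Rad} M = \sum_{L \ll M} L$, with the essentialness hypothesis inserted in the appropriate places.

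For the inclusion $\sum_{L \ll_{g} M} L \subseteq \operatorname{Rad}_{g} M$, I would take an arbitrary g-small submodule $L \ll_{g} M$ and show $L \subseteq K$ for every maximal essential submodule $K$ of $M$. If $L \not\subseteq K$, then maximality of $K$ gives $K + L = M$, and since $K$ is essential the defining property of $L \ll_{g} M$ forces $K = M$, which is the desired contradiction. Thus every g-small submodule sits inside the intersection of the maximal essential submodules.

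For the reverse inclusion $\operatorname{Rad}_{g} M \subseteq \sum_{L \ll_{g} M} L$, it is enough to show that for every $x \in \operatorname{Rad}_{g} M$ the cyclic submodule $Rx$ is g-small in $M$; the conclusion then follows by writing $x \in Rx$ and summing. So suppose $M = Rx + T$ with $T \trianglelefteq M$, and assume for contradiction that $T \neq M$. The quotient $M/T$ is nonzero and cyclic (generated by $x+T$), hence admits a maximal submodule by a standard Zorn argument, pulling back to a maximal submodule $K$ of $M$ with $T \subseteq K \subsetneq M$. Because $T$ is essential in $M$ and $T \subseteq K$, the submodule $K$ is essential as well, so $K$ is actually a maximal essential submodule of $M$. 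But then $x \in \operatorname{Rad}_{g} M \subseteq K$, whence $Rx \subseteq K$ and $M = Rx + T \subseteq K$, contradicting $K \neq M$.

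The step I expect to be the main hinge is the second direction, specifically the observation that once we extend the proper essential submodule $T$ to a maximal submodule $K$, this $K$ is automatically a \emph{maximal essential} submodule and not merely a maximal submodule. Without that remark, the containment $\operatorname{Rad}_{g} M \subseteq K$ is not available and the contradiction does not close. The rest is a routine unfolding of the definitions of $\ll_{g}$, $\operatorname{Rad}_{g}$, and $\trianglelefteq$, together with the fact that nonzero cyclic modules possess maximal submodules.
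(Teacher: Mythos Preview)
Your proof is correct. The paper itself does not supply an argument for this lemma; it simply refers the reader to the cited source (Ko\c{s}ar--Nebiyev--S\"{o}kmez), so there is no in-paper proof to compare against. The route you take is the natural adaptation of the classical identity $\operatorname{Rad} M=\sum_{L\ll M}L$, and the observation you single out as the hinge---that a maximal submodule containing an essential submodule is automatically essential, hence counts among the maximal essential submodules defining $\operatorname{Rad}_{g}M$---is precisely what makes the generalized version work.
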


\begin{proof}
See$\left[ \text{\ref{1.3}}\right] $.
\end{proof}

\begin{lemma}
\label{3}The following assertions are hold.

$\left( 1\right) $ If $M$ is an $R-$module, then $Rm\ll _{g}M$ for every $%
m\in Rad_{g}M$.

$\left( 2\right) $ If $N\leq M$, then $Rad_{g}N\leq Rad_{g}M.$

$\left( 3\right) $ If $K,L\leq M$, then $Rad_{g}K+Rad_{g}L\leq Rad_{g}\left(
K+L\right) .$

$\left( 4\right) $ If $f:M\longrightarrow N$ is an $R$-module homomorphism,
then $f\left( Rad_{g}M\right) \leq Rad_{g}N.$

$\left( 5\right) $ If $K,L\leq M$, then $Rad_{g}\frac{K+L}{L}\leq \frac{%
Rad_{g}K+L}{L}.$
\end{lemma}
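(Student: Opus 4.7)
The whole proof will rest on Lemma~\ref{2}, which identifies $Rad_g M$ with the sum of all g-small submodules of $M$. Combined with the transfer principles in Lemma~\ref{1} and its corollaries, this lets me translate each of the five assertions into a routine manipulation of g-small submodules, with the exception of (5), which I expect to be the main obstacle.

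For (1), I will pick $m \in Rad_g M$ and use Lemma~\ref{2} to write $m = x_1 + \dots + x_n$ with each $x_i \in L_i$ and $L_i \ll_g M$; then $Rm \leq L_1 + \dots + L_n$, which the first corollary after Lemma~\ref{1} identifies as a g-small submodule of $M$, so Lemma~\ref{1}(1) yields $Rm \ll_g M$. Part (2) is handled by noting that every $L \ll_g N$ with $N \leq M$ is also g-small in $M$ by Lemma~\ref{1}(2), so the Lemma~\ref{2} generators of $Rad_g N$ already sit inside the analogous generators of $Rad_g M$. Part (3) then falls out by applying (2) to the inclusions $K \leq K+L$ and $L \leq K+L$. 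For (4), I will decompose $x \in Rad_g M$ as $\sum x_i$ with $x_i \in L_i \ll_g M$; by Lemma~\ref{1}(3) each $f(L_i) \ll_g N$, so $f(x) = \sum f(x_i) \in Rad_g N$.

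The step I expect to be the real difficulty is (5). The natural strategy is to feed the canonical surjection $\pi\colon K \to (K+L)/L$, $k \mapsto k+L$, into (4), which produces $\pi(Rad_g K) \leq Rad_g\bigl((K+L)/L\bigr)$, i.e., $(Rad_g K + L)/L \leq Rad_g\bigl((K+L)/L\bigr)$. This is the cleanest inclusion the machinery delivers, but it runs in the opposite orientation to the one asserted. Establishing the stated direction would require lifting an arbitrary g-small submodule of $(K+L)/L$ to a submodule of $K$ contained in $Rad_g K + L$, an operation Lemma~\ref{1} does not directly support, since maximal essential submodules of the quotient do not correspond cleanly to maximal essential submodules of $K$. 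I would therefore want to examine the author's argument for (5) most carefully, either to locate the essential-submodule correspondence that salvages the stated inequality or to check whether the intended direction is the reverse one, in which case (4) alone suffices.
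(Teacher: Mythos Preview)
Your approach for (1)--(4) is exactly what the paper intends: its entire proof reads ``Clear from Lemma~\ref{1} and Lemma~\ref{2},'' and the decompositions you sketch are the standard way to unpack that remark.

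Your suspicion about (5) is well founded. Applying (4) to the canonical map $K\to (K+L)/L$ yields
\[
\frac{Rad_g K + L}{L}\;\leq\; Rad_g\!\left(\frac{K+L}{L}\right),
\]
which is the reverse of the inequality printed in the statement. If you look at the one place in the paper where (5) is invoked, namely Lemma~\ref{6}, the chain of inclusions there reads
\[
\frac{U\cap V + K}{K}\;\leq\;\frac{Rad_g V + K}{K}\;\leq\; Rad_g\!\left(\frac{V+K}{K}\right),
\]
so it is precisely the reverse inclusion that is actually used. Hence (5) is almost certainly a misprint for the opposite inequality, and your argument via (4) is the intended one; there is no hidden essential-submodule correspondence to hunt for.
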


\begin{proof}
Clear from Lemma \ref{1} and Lemma \ref{2}.
\end{proof}

\begin{lemma}
\label{8}Let $M=\oplus _{i\in I}M_{i}.$ Then $Rad_{g}M=\oplus _{i\in
I}Rad_{g}M_{i}.$
\end{lemma}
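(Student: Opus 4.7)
The plan is to prove the claimed equality by showing both inclusions separately, using Lemma \ref{3} as the main engine.

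For the inclusion $\oplus_{i\in I}Rad_{g}M_{i}\leq Rad_{g}M$, I would take an arbitrary element $x\in \oplus_{i\in I}Rad_{g}M_{i}$, write it as a finite sum $x=\sum_{i\in F}x_{i}$ with $x_{i}\in Rad_{g}M_{i}$, and then apply Lemma \ref{3}(2) to each $M_{i}\leq M$ to conclude $x_{i}\in Rad_{g}M$ for every $i$. Closure of $Rad_{g}M$ under addition then gives $x\in Rad_{g}M$. This direction is essentially immediate.

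For the reverse inclusion $Rad_{g}M\leq \oplus_{i\in I}Rad_{g}M_{i}$, I would use the canonical projection homomorphisms $\pi_{j}:M=\oplus_{i\in I}M_{i}\longrightarrow M_{j}$ for each $j\in I$. Given $x\in Rad_{g}M$ with decomposition $x=\sum_{i\in F}x_{i}$, Lemma \ref{3}(4) applied to $\pi_{j}$ yields $\pi_{j}(Rad_{g}M)\leq Rad_{g}M_{j}$, so that the $j$-th component $x_{j}=\pi_{j}(x)$ lies in $Rad_{g}M_{j}$. Since this holds for every coordinate, $x$ belongs to $\oplus_{i\in I}Rad_{g}M_{i}$.

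Combining these two inclusions yields the desired equality. The key step, and the only one that requires any actual content, is the use of the projection maps together with Lemma \ref{3}(4) in the second direction; apart from this, the argument is a routine componentwise verification. The main obstacle to watch out for is ensuring that one really does recover the full element $x$ from its coordinates (which is automatic in a direct sum) and that the sum $\sum Rad_{g}M_{i}$ is indeed direct, which follows from the fact that each $Rad_{g}M_{i}\leq M_{i}$ and the $M_{i}$ themselves form an internal direct sum in $M$.
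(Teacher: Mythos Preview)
Your proposal is correct and follows essentially the same route as the paper. The only cosmetic difference is that for the inclusion $Rad_{g}M\leq \oplus_{i\in I}Rad_{g}M_{i}$ the paper first invokes Lemma~\ref{3}(1) to get $Rx\ll_{g}M$ and then pushes this through the projections via Lemma~\ref{1}, whereas you apply Lemma~\ref{3}(4) to the projections directly; since Lemma~\ref{3}(4) is itself derived from those two facts, the arguments coincide.
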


\begin{proof}
Since $M_{i}\leq M$, then by Lemma \ref{3}$\left( 2\right) $, $%
Rad_{g}M_{i}\leq Rad_{g}M$ and $\oplus _{i\in I}Rad_{g}M_{i}\leq Rad_{g}M.$
Let $x\in Rad_{g}M.$ Then by Lemma \ref{3}$\left( 1\right) $, $Rx\ll _{g}M.$
Since $x\in M=\oplus _{i\in I}M_{i}$, there exist $i_{1},i_{2},...,i_{k}\in
I $ and $x_{i_{1}}\in M_{i_{1}}$, $x_{i_{2}}\in M_{i_{2}}$, ..., $%
x_{i_{k}}\in M_{i_{k}}$ such that $x=x_{i_{1}}+x_{i_{2}}+...+x_{i_{k}}$.
Since $Rx\ll _{g}M$ , then by Lemma \ref{1}$\left( 4\right) $, under the
canonical epimorphism $\pi _{i_{t}}$ $\left( t=1,2,...,k\right) $ $%
Rx_{i_{t}}=\pi _{i_{t}}\left( Rx\right) \ll _{g}Rx_{i_{t}}.$ Then $%
x_{i_{t}}\in Rad_{g}M_{i_{t}}$ $\left( t=1,2,...,k\right) $ and $%
x=x_{i_{1}}+x_{i_{2}}+...+x_{i_{k}}\in \oplus _{i\in I}Rad_{g}M_{i}$. Hence $%
Rad_{g}M\leq \oplus _{i\in I}Rad_{g}M_{i}$ and since $\oplus _{i\in
I}Rad_{g}M_{i}\leq Rad_{g}M$, $Rad_{g}M=\oplus _{i\in I}Rad_{g}M_{i}$.
\end{proof}

\section{G-RADICAL\ SUPPLEMENTED\ MODULES}

\begin{definition}
Let $M$ be an $R$-module and $U,V\leq M$. If $M=U+V$ and $U\cap V\leq
Rad_{g}V$, then $V$ is called a generalized radical supplement $\left( \text{%
briefly, g-radical supplement}\right) $ of $U$ in $M$. If every submodule of 
$M$ has a generalized radical supplement in $M$, then $M$ is called a
generalized radical supplemented $\left( \text{briefly, g-radical
supplemented}\right) $ module.
\end{definition}

Clearly we see that every g-supplemented module is g-radical supplemented.
But the converse is not true in general $\left( \text{See Example \ref{14}
and Example \ref{15}}\right) $.

\begin{lemma}
\label{13}Let $M$ be an $R$-module and $U,V\leq M$. Then $V$ is a g-radical
supplement of $U$ in $M$ if and only if $M=U+V$ and $Rm\ll _{g}V$ for every $%
m\in U\cap V$.
\end{lemma}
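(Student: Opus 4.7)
The plan is to reduce the lemma to the definition of g-radical supplement and then invoke Lemma~\ref{2} and Lemma~\ref{3}(1) to handle the equivalence between $U\cap V\leq Rad_{g}V$ and the pointwise condition $Rm\ll_{g}V$ for each $m\in U\cap V$. Since the condition $M=U+V$ appears in both sides, only the second condition needs to be shown equivalent.

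For the forward direction, I would assume that $V$ is a g-radical supplement of $U$ in $M$, so $M=U+V$ and $U\cap V\leq Rad_{g}V$. Fix any $m\in U\cap V$; then $m\in Rad_{g}V$, and a direct appeal to Lemma~\ref{3}(1), applied inside $V$, gives $Rm\ll_{g}V$. This is the easy half.

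For the converse, I would assume $M=U+V$ and that $Rm\ll_{g}V$ for every $m\in U\cap V$. Given any $m\in U\cap V$, since $R$ is unital we have $m\in Rm$, and by Lemma~\ref{2} applied to $V$, every generalized small submodule of $V$ is contained in $Rad_{g}V$. Hence $m\in Rm\subseteq Rad_{g}V$, so $U\cap V\leq Rad_{g}V$, meaning $V$ is a g-radical supplement of $U$ in $M$.

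The whole argument is essentially a translation between a submodule-level statement and an element-level statement via Lemma~\ref{2} and Lemma~\ref{3}(1), so there is no real obstacle; the only thing to be careful about is that $Rm\ll_{g}V$ (and not $\ll_{g}M$) is both what the hypothesis gives and what Lemma~\ref{3}(1) produces, so the equivalence really is taking place inside $V$ and the characterization of $Rad_{g}V$ applies directly.
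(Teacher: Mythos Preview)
Your proof is correct and follows essentially the same approach as the paper: both directions reduce to translating between $U\cap V\leq Rad_{g}V$ and the pointwise condition via the characterization of $Rad_{g}V$. Your citation of Lemma~\ref{2} for the converse is in fact slightly more precise than the paper's, which invokes Lemma~\ref{3}(1) in both directions even though that lemma as stated only gives one implication.
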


\begin{proof}
$\left( \Rightarrow \right) $ Since $V$ is a g-radical supplement of $U$ in $%
M$, $M=U+V$ and $U\cap V\leq Rad_{g}V$. Let $m\in U\cap V$. Since $U\cap
V\leq Rad_{g}V$, $m\in Rad_{g}V$. Hence by Lemma \ref{3}$\left( 1\right) $, $%
Rm\ll _{g}V$.

$\left( \Leftarrow \right) $ Since $Rm\ll _{g}V$ for every $m\in U\cap V$,
then by Lemma \ref{3}$\left( 1\right) $, $U\cap V\leq Rad_{g}V$ and hence $V$
is a g-radical supplement of $U$ in $M$.
\end{proof}

\begin{lemma}
\label{4}Let $M$ be an $R$-module, $M_{1},U,X\leq M$ and $Y\leq M_{1}$. If $%
X $ is a g-radical supplement of $M_{1}+U$ in $M$ and $Y$ is a g-radical
supplement of $\left( U+X\right) \cap M_{1}$ in $M_{1}$, then $X+Y$ is a
g-radical supplement of $U$ in $M$.
\end{lemma}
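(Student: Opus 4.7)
The plan is to verify the two defining properties of a g-radical supplement for the pair $(U, X+Y)$ in $M$: first the sum decomposition $M=U+(X+Y)$, and second the inclusion $U\cap (X+Y)\le Rad_{g}(X+Y)$.

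For the sum decomposition, I would chain the two given hypotheses. Since $X$ is a g-radical supplement of $M_{1}+U$ in $M$, we have $M=M_{1}+U+X$. Since $Y$ is a g-radical supplement of $(U+X)\cap M_{1}$ in $M_{1}$, we have $M_{1}=\bigl((U+X)\cap M_{1}\bigr)+Y$. Substituting the second equality into the first, and noting that $(U+X)\cap M_{1}\subseteq U+X$, gives $M=U+X+Y=U+(X+Y)$ immediately, with no subtlety.

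The main work is the radical containment. I would take an arbitrary $m\in U\cap(X+Y)$ and write $m=x+y$ with $x\in X$, $y\in Y$. The idea is to isolate $x$ and $y$ separately inside the intersections controlled by the hypotheses. From $m=x+y\in U$ and $y\in Y\le M_{1}$, I would rewrite $x=m-y\in U+M_{1}$, so $x\in X\cap (M_{1}+U)\le Rad_{g}X$ by the first hypothesis. Symmetrically, from $y=m-x$ with $m\in U$ and $x\in X$, I get $y\in U+X$, and since $y\in Y\le M_{1}$ we conclude $y\in (U+X)\cap M_{1}$; combined with $y\in Y$ and the second hypothesis this yields $y\in Rad_{g}Y$. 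Then Lemma~\ref{3}(3) gives $Rad_{g}X+Rad_{g}Y\le Rad_{g}(X+Y)$, hence $m=x+y\in Rad_{g}(X+Y)$. By Lemma~\ref{13} (or directly), this is what we need.

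The only place where one has to be careful is in the element chase: one must use $Y\le M_{1}$ precisely where $x$ is being pushed into $M_{1}+U$, and one must notice that $y$ actually lives in $(U+X)\cap M_{1}\cap Y$ and not merely in $(U+X)\cap M_{1}$. Everything else is a bookkeeping application of earlier lemmas, so I expect no real obstacle beyond making the two symmetric set-membership arguments explicit.
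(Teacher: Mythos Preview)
Your proposal is correct and follows essentially the same route as the paper: the paper phrases the key step as the submodule inclusion $U\cap(X+Y)\le (U+X)\cap Y+(U+Y)\cap X$ and then bounds each summand using $Y\le M_{1}$ and the two hypotheses, whereas you carry out the identical argument element-by-element. Both finish by applying Lemma~\ref{3}(3), so there is no substantive difference.
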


\begin{proof}
Since $X$ is a g-radical supplement of $M_{1}+U$ in $M$, $M=M_{1}+U+X$ and $%
\left( M_{1}+U\right) \cap X\leq Rad_{g}X.$ Since $Y$ is a g-radical
supplement of $\left( U+X\right) \cap M_{1}$ in $M_{1}$, $M_{1}=\left(
U+X\right) \cap M_{1}+Y$ and $\left( U+X\right) \cap Y=\left( U+X\right)
\cap M_{1}\cap Y\leq Rad_{g}Y$. Then $M=M_{1}+U+X=M_{1}=\left( U+X\right)
\cap M_{1}+Y+U+X=U+X+Y$ and, by Lemma \ref{3}$\left( 3\right) $, $U\cap
\left( X+Y\right) \leq \left( U+X\right) \cap Y+\left( U+Y\right) \cap X\leq
Rad_{g}Y+\left( M_{1}+U\right) \cap X\leq Rad_{g}Y+Rad_{g}X\leq
Rad_{g}\left( X+Y\right) $. Hence $X+Y$ is a g-radical supplement of $U$ in $%
M$.
\end{proof}

\begin{lemma}
\label{5}Let $M=M_{1}+M_{2}$. If $M_{1}$ and $M_{2}$ are g-radical
supplemented, then $M$ is also g-radical supplemented.
\end{lemma}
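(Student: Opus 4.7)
The plan is to reduce everything to the key technical lemma, Lemma \ref{4}. Given an arbitrary submodule $U \leq M$, I need to exhibit a g-radical supplement of $U$ in $M$, and Lemma \ref{4} tells me that it suffices to produce two auxiliary g-radical supplements: one for $M_1 + U$ in $M$ (coming from $M_2$) and one for a certain submodule of $M_1$ (coming from $M_1$).

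First I would use the g-radical supplementedness of $M_2$. Since $M = M_1 + M_2$, the submodule $(M_1 + U) \cap M_2$ of $M_2$ admits a g-radical supplement $X$ in $M_2$, i.e.\ $M_2 = (M_1+U)\cap M_2 + X$ with $(M_1+U)\cap M_2 \cap X \leq Rad_g X$. I would then verify that this same $X$ is a g-radical supplement of $M_1+U$ in $M$: the sum relation $M = M_1 + M_2 = M_1 + U + (M_1+U)\cap M_2 + X = M_1 + U + X$ follows by substitution, and because $X \leq M_2$ we get $(M_1+U)\cap X = (M_1+U)\cap M_2 \cap X \leq Rad_g X$.

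Second, I would use the g-radical supplementedness of $M_1$ applied to the submodule $(U+X)\cap M_1$ of $M_1$, producing a g-radical supplement $Y \leq M_1$ of $(U+X)\cap M_1$ in $M_1$.

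Finally, Lemma \ref{4} directly yields that $X+Y$ is a g-radical supplement of $U$ in $M$, finishing the proof. The only mildly delicate step is the first one, confirming that a g-radical supplement of $(M_1+U)\cap M_2$ inside $M_2$ lifts to a g-radical supplement of $M_1+U$ inside the whole of $M$; this is standard and uses only that $X \leq M_2$ together with the modular-style manipulation above, so there is no real obstacle.
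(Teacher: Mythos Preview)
Your proof is correct and follows essentially the same strategy as the paper's: reduce the problem to Lemma~\ref{4}. The only cosmetic differences are that the paper treats $M_{1}$ before $M_{2}$ (swapping the roles you give them) and, instead of your direct verification that $X$ is a g-radical supplement of $M_{1}+U$ in $M$, the paper obtains the analogous fact as a second instance of Lemma~\ref{4}, starting from the trivial observation that $0$ is a g-radical supplement of $M_{1}+M_{2}+U=M$ in $M$.
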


\begin{proof}
Let $U\leq M$. Then $0$ is a g-radical supplement of $M_{1}+M_{2}+U$ in $M$.
Since $M_{1}$ is g-radical supplemented, there exists a g-radical supplement 
$X$ of $\left( M_{2}+U\right) \cap M_{1}=\left( M_{2}+U+0\right) \cap M_{1}$
in $M_{1}$. Then by Lemma \ref{4}, $X+0=X$ is a g-radical supplement of $%
M_{2}+U$ in $M$. Since $M_{2}$ is g-radical supplemented, there exists a
g-radical supplement $Y$ of $\left( U+X\right) \cap M_{2}$ in $M_{2}$. Then
by Lemma \ref{4}, $X+Y$ is a g-radical supplement of $U$ in $M$.
\end{proof}

\begin{corollary}
\label{10}Let $M=M_{1}+M_{2}+...+M_{k}$. If $M_{i}$ is g-radical
supplemented for every $i=1,2,...,k$, then $M$ is also g-radical
supplemented.
\end{corollary}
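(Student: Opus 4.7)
The plan is to prove this by straightforward induction on $k$, leveraging Lemma \ref{5} as both the base of the induction and the engine of the inductive step. The case $k=1$ is vacuous (the hypothesis coincides with the conclusion), and the case $k=2$ is precisely the statement of Lemma \ref{5}, so no work is needed there.

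For the inductive step, I would suppose that the result holds for any sum of $k-1$ g-radical supplemented submodules, and set $N = M_{1}+M_{2}+\ldots +M_{k-1}$. Since $N$ itself is written as the sum of the $k-1$ submodules $M_{1},\ldots,M_{k-1}$, each of which is g-radical supplemented by hypothesis, the inductive hypothesis applied to $N$ yields that $N$ is g-radical supplemented. Now $M = N + M_{k}$ is a sum of the two g-radical supplemented submodules $N$ and $M_{k}$, so Lemma \ref{5} immediately gives that $M$ is g-radical supplemented, completing the induction.

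There is essentially no obstacle here: the only point that deserves a sentence of care is the legitimacy of applying the inductive hypothesis to $N$, which is valid because the notion of g-radical supplemented is an intrinsic property of the module $N$ (the submodules $M_{1},\ldots,M_{k-1}$ sit inside $N$ just as well as inside $M$, and being g-radical supplemented does not depend on any ambient module). Once that is observed, the corollary follows by a single invocation of Lemma \ref{5} at the top of the tower.
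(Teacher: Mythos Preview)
Your argument is correct and is precisely the intended one: the paper's own proof is simply ``Clear from Lemma \ref{5},'' which is shorthand for exactly the induction you wrote out. Your remark about g-radical supplementedness being intrinsic to $N$ is accurate and justifies the application of the inductive hypothesis.
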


\begin{proof}
Clear from Lemma \ref{5}.
\end{proof}

\begin{lemma}
\label{6}Let $M$ be an $R-$module, $U,V\leq M$ and $K\leq U$. If $V$ is a
g-radical supplement of $U$ in $M$, then $\left( V+K\right) /K$ is a
g-radical supplement of $U/K$ in $M/K$.
\end{lemma}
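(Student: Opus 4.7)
The plan is to verify the two defining conditions for a g-radical supplement directly, using the hypothesis $K\leq U$ to apply the modular law, and then transport the containment $U\cap V\leq Rad_{g}V$ to the quotient via Lemma \ref{3}$(4)$.

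First I would check that $M/K = U/K + (V+K)/K$. Since $V$ is a g-radical supplement of $U$ in $M$ we have $M = U+V$, so $M/K = (U+V+K)/K = U/K + (V+K)/K$, using that $K\leq U\leq M$.

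Next, the main computation is to bound the intersection $(U/K)\cap((V+K)/K)$. I would rewrite this as $\bigl(U\cap(V+K)\bigr)/K$, and then use the modular law (valid because $K\leq U$) to obtain $U\cap(V+K) = (U\cap V)+K$. Hence $(U/K)\cap((V+K)/K) = \bigl((U\cap V)+K\bigr)/K$. By hypothesis, $U\cap V\leq Rad_{g}V$, so this intersection is contained in $(Rad_{g}V+K)/K$.

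Finally, I would apply Lemma \ref{3}$(4)$ to the canonical epimorphism $f:V\to (V+K)/K$ sending $v\mapsto v+K$, which gives $f(Rad_{g}V)\leq Rad_{g}\bigl((V+K)/K\bigr)$. Since $f(Rad_{g}V) = (Rad_{g}V+K)/K$, the previous inclusion chains together to yield $(U/K)\cap((V+K)/K)\leq Rad_{g}\bigl((V+K)/K\bigr)$, completing the proof. The only step that needs a bit of care is the modular law application, which requires the assumption $K\leq U$; all the other ingredients are routine manipulations with quotients and the cited lemma.
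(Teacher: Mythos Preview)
Your proof is correct and follows essentially the same route as the paper's: both verify $M/K=U/K+(V+K)/K$, use the modular law (with $K\leq U$) to identify the intersection as $(U\cap V+K)/K$, and then push $Rad_gV$ into the quotient. The only cosmetic difference is that the paper cites Lemma~\ref{3}$(5)$ for the inclusion $(Rad_gV+K)/K\leq Rad_g\bigl((V+K)/K\bigr)$, whereas you obtain the same inclusion directly from Lemma~\ref{3}$(4)$ applied to the canonical epimorphism $V\to(V+K)/K$; since part~$(5)$ is itself an immediate consequence of part~$(4)$, the two arguments are effectively identical.
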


\begin{proof}
Since $V$ is a g-radical supplement of $U$ in $M$, $M=U+V$ and $U\cap V\leq
Rad_{g}V$. Then $M/K=U/K+\left( V+K\right) /K$ and by Lemma \ref{3}$\left(
5\right) $, $\left( U/K\right) \cap \left( \left( V+K\right) /K\right)
=\left( U\cap V+K\right) /K\leq \left( Rad_{g}V+K\right) /K\leq
Rad_{g}\left( V+K\right) /K$. Hence $\left( V+K\right) /K$ is a g-radical
supplement of $U/K$ in $M/K$.
\end{proof}

\begin{lemma}
\label{7}Every factor module of a g-radical supplemented module is g-radical
supplemented.
\end{lemma}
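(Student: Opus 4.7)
The plan is to reduce directly to Lemma \ref{6}. Let $M$ be a g-radical supplemented module and let $K\leq M$; I want to show that the factor module $M/K$ is g-radical supplemented.

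First, take an arbitrary submodule of $M/K$. By the correspondence theorem, it has the form $U/K$ for some submodule $U$ of $M$ with $K\leq U\leq M$. Since $M$ is g-radical supplemented, the submodule $U$ admits a g-radical supplement $V$ in $M$; that is, $M=U+V$ and $U\cap V\leq Rad_{g}V$.

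Now I apply Lemma \ref{6} with the same $U$, $V$ and with $K$ in the role of the submodule of $U$ (note that the hypothesis $K\leq U$ is satisfied by choice of $U$). Lemma \ref{6} then gives directly that $(V+K)/K$ is a g-radical supplement of $U/K$ in $M/K$. Since $U/K$ was an arbitrary submodule of $M/K$, this proves that $M/K$ is g-radical supplemented.

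There is essentially no obstacle here: the heavy lifting (pushing the relation $U\cap V\leq Rad_{g}V$ down to the quotient via Lemma \ref{3}$(5)$) has already been carried out in Lemma \ref{6}, so the present lemma is just the observation that every submodule of $M/K$ arises from a submodule of $M$ containing $K$, to which Lemma \ref{6} applies.
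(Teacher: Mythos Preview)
Your argument is correct and is exactly the intended one: the paper's own proof simply reads ``Clear from Lemma~\ref{6},'' and what you have written is precisely the unpacking of that reference via the correspondence theorem.
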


\begin{proof}
Clear from Lemma \ref{6}.
\end{proof}

\begin{corollary}
\label{11}The homomorphic image of a g-radical supplemented module is
g-radical supplemented.
\end{corollary}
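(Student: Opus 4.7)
The plan is to reduce this corollary directly to Lemma \ref{7}, using the First Isomorphism Theorem. Concretely, suppose $M$ is a g-radical supplemented $R$-module and $f:M\longrightarrow N$ is an $R$-module homomorphism; I want to show that the homomorphic image $f(M)\leq N$ is g-radical supplemented.

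First I would invoke the First Isomorphism Theorem to obtain an $R$-module isomorphism $\overline{f}:M/\ker f\longrightarrow f(M)$. By Lemma \ref{7}, the factor module $M/\ker f$ is g-radical supplemented, so it suffices to transport this property across $\overline{f}$. For this I would verify the small observation (not spelled out in the paper but essentially routine from the definitions and from Lemma \ref{1}$(3)$ and Lemma \ref{3}$(4)$) that if $A$ and $B$ are isomorphic $R$-modules and $A$ is g-radical supplemented, then $B$ is g-radical supplemented: given $U'\leq B$ and letting $U$ be its preimage under the isomorphism, if $V$ is a g-radical supplement of $U$ in $A$ then the image of $V$ under the isomorphism is a g-radical supplement of $U'$ in $B$, because an isomorphism carries $\operatorname{Rad}_{g}$ to $\operatorname{Rad}_{g}$.

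Combining the two steps, $f(M)\cong M/\ker f$ is g-radical supplemented, which is the desired conclusion. The main (and only) subtlety is the isomorphism-invariance step, and this is handled by the fact that $\operatorname{Rad}_{g}$ is preserved under isomorphisms (a direct consequence of Lemma \ref{3}$(4)$ applied to both the isomorphism and its inverse). Everything else is immediate from Lemma \ref{7}.
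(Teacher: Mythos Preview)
Your proposal is correct and follows essentially the same route as the paper: the paper's proof is simply ``Clear from Lemma \ref{7}'', which implicitly uses the First Isomorphism Theorem to identify a homomorphic image with a factor module. You have merely made explicit the isomorphism-invariance step that the paper takes for granted.
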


\begin{proof}
Clear from Lemma \ref{7}.
\end{proof}

\begin{lemma}
\label{12}Let $M$ be a g-radical supplemented module. Then every finitely $%
M- $generated module is g-radical supplemented.
\end{lemma}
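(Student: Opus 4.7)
The plan is to reduce the statement to the two machinery lemmas already established: Corollary \ref{10} (finite sums of g-radical supplemented modules are g-radical supplemented) and Corollary \ref{11} (homomorphic images of g-radical supplemented modules are g-radical supplemented).

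First I would unwind the definition: a module $N$ is finitely $M$-generated precisely when there exist a positive integer $k$ and an $R$-module epimorphism $f:M^{k}\longrightarrow N$, where $M^{k}=M\oplus M\oplus \cdots \oplus M$ ($k$ summands). So after fixing such a $k$ and such an $f$, the task is split into two independent pieces.

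For the first piece, I would observe that $M^{k}$ itself can be written as the sum of $k$ copies of $M$, i.e.\ $M^{k}=M_{1}+M_{2}+\cdots +M_{k}$ where each $M_{i}\cong M$ is g-radical supplemented by hypothesis. Then Corollary \ref{10} immediately gives that $M^{k}$ is g-radical supplemented. For the second piece, since $N=f(M^{k})$ is a homomorphic image of the g-radical supplemented module $M^{k}$, Corollary \ref{11} yields that $N$ is g-radical supplemented, which completes the argument.

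There is essentially no main obstacle here: the lemma is a direct corollary of the preceding structural results, and the only thing to verify is that the definition of ``finitely $M$-generated'' indeed provides the epimorphism from a finite direct sum of copies of $M$ that is needed to chain Corollary \ref{10} into Corollary \ref{11}. If one wanted to be fully explicit, one could also note that each copy of $M$ is trivially isomorphic (hence g-radical supplemented) rather than appealing to a separate preservation result under isomorphism, but this is immediate from the definition.
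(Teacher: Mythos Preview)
Your proposal is correct and follows exactly the paper's approach: the paper's proof is simply ``Clear from Corollary \ref{10} and Corollary \ref{11},'' and you have spelled out precisely how those two results combine via the definition of a finitely $M$-generated module.
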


\begin{proof}
Clear from Corollary \ref{10} and Corollary \ref{11}.
\end{proof}

\begin{corollary}
Let $R$ be a ring. Then $_{R}R$ is g-radical supplemented if and only if
every finitely generated $R-$module is g-radical supplemented.
\end{corollary}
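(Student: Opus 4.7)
The plan is to prove both directions by reducing to results already established in the excerpt, so the corollary is essentially a packaging of Lemma~\ref{12}.

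For the forward direction ($\Rightarrow$), I would assume $_{R}R$ is g-radical supplemented and let $M$ be an arbitrary finitely generated $R$-module. By definition, $M$ admits a finite generating set $m_{1},m_{2},\ldots,m_{n}$, so there is an epimorphism $R^{n}\twoheadrightarrow M$ sending the standard basis to the $m_{i}$. This exhibits $M$ as a finitely $R$-generated module in the sense used in Lemma~\ref{12}, and that lemma directly gives that $M$ is g-radical supplemented.

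For the reverse direction ($\Leftarrow$), I would simply observe that $_{R}R$ is generated as a left module over itself by the single element $1$, hence is finitely generated. Applying the hypothesis to this particular module yields that $_{R}R$ is g-radical supplemented.

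There is no real obstacle here; the only thing to be careful about is matching the terminology ``finitely $M$-generated'' used in Lemma~\ref{12} with ``finitely generated $R$-module'' used in the corollary's statement, i.e.\ noting that a finitely generated $R$-module is exactly a finitely $R$-generated module (the image of some $R^{n}$), so Lemma~\ref{12} with $M={}_{R}R$ is precisely the statement needed for the nontrivial direction.
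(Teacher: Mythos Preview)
Your proposal is correct and matches the paper's approach: the paper's proof is simply ``Clear from Lemma~\ref{12},'' and you have unpacked exactly that, applying Lemma~\ref{12} with $M={}_{R}R$ for the forward direction and noting the trivial converse. Your remark identifying finitely generated $R$-modules with finitely ${}_{R}R$-generated modules is precisely the bridge the paper leaves implicit.
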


\begin{proof}
Clear from Lemma \ref{12}.
\end{proof}

\begin{theorem}
Let $M$ be an $R-$module. If $\ M$ is g-radical supplemented, then $%
M/Rad_{g}M$ is semisimple.
\end{theorem}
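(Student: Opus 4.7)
The plan is to prove the statement by showing that every submodule of $M/Rad_{g}M$ is a direct summand, which is the standard characterization of semisimplicity. Every submodule of $M/Rad_{g}M$ has the form $U/Rad_{g}M$ for some submodule $U$ of $M$ with $Rad_{g}M\leq U$, so it suffices to produce a direct complement for an arbitrary such $U/Rad_{g}M$.

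First I would invoke the hypothesis: since $M$ is g-radical supplemented, the submodule $U$ admits a g-radical supplement $V$ in $M$, meaning $M=U+V$ and $U\cap V\leq Rad_{g}V$. By Lemma \ref{3}$(2)$ applied to $V\leq M$, we have $Rad_{g}V\leq Rad_{g}M$, and therefore $U\cap V\leq Rad_{g}M$. This inclusion is the crucial structural fact that will make the intersection in the quotient collapse.

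Next I would pass to the quotient. From $M=U+V$ we immediately get
\[
M/Rad_{g}M = U/Rad_{g}M + (V+Rad_{g}M)/Rad_{g}M.
\]
To show this sum is direct, I would compute the intersection $U\cap(V+Rad_{g}M)$. Because $Rad_{g}M\leq U$, the modular law gives $U\cap(V+Rad_{g}M)=(U\cap V)+Rad_{g}M$, and by the previous paragraph this equals $Rad_{g}M$. Consequently $(U/Rad_{g}M)\cap\bigl((V+Rad_{g}M)/Rad_{g}M\bigr)=0$, so $U/Rad_{g}M$ is a direct summand of $M/Rad_{g}M$.

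Since the submodule $U/Rad_{g}M$ was arbitrary, every submodule of $M/Rad_{g}M$ is a direct summand, and therefore $M/Rad_{g}M$ is semisimple. The only step requiring any real care is ensuring $U\cap V\leq Rad_{g}M$, but this is immediate from Lemma \ref{3}$(2)$; the rest is a direct application of the modular law, so I expect no substantive obstacle.
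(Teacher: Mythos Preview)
Your proof is correct and follows essentially the same route as the paper: pick a g-radical supplement $V$ of $U$, use $U\cap V\leq Rad_gV\leq Rad_gM$ (the paper leaves the appeal to Lemma~\ref{3}(2) implicit), and conclude via the modular law that $(V+Rad_gM)/Rad_gM$ is a direct complement of $U/Rad_gM$. If anything, your write-up is slightly more careful in justifying the intersection computation.
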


\begin{proof}
Let $U/Rad_{g}M\leq M/Rad_{g}M$. Since $M$ is g-radical supplemented, there
exists a g-radical supplement $V$ of $U$ in $\ M$. Then $M=U+V$ and $U\cap
V\leq Rad_{g}V$. Thus $M/Rad_{g}M=U/Rad_{g}M+\left( V+Rad_{g}M\right)
/Rad_{g}M$ and $\left( U/Rad_{g}M\right) \cap \left( \left(
V+Rad_{g}M\right) /Rad_{g}M\right) =\left( U\cap V+Rad_{g}M\right)
/Rad_{g}M\leq \left( Rad_{g}V+Rad_{g}M\right) /Rad_{g}M=Rad_{g}M/Rad_{g}M=0$%
. Hence $M/Rad_{g}M=U/Rad_{g}M\oplus \left( V+Rad_{g}M\right) /Rad_{g}M$ and 
$U/Rad_{g}M$ is a direct summand of $M$.
\end{proof}

\begin{lemma}
\label{9}Let $M$ be a g-radical supplemented module and $L\leq M$ with $%
L\cap Rad_{g}M=0$. Then $L$ is semisimple. In particular, a g-radical
supplemented module $M$ with $Rad_{g}M=0$ is semisimple.
\end{lemma}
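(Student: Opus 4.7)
The plan is to show that every submodule of $L$ is a direct summand of $L$, from which semisimplicity follows. Fix an arbitrary submodule $K\leq L$. Since $K\leq M$ and $M$ is g-radical supplemented, there exists a g-radical supplement $V$ of $K$ in $M$, so $M=K+V$ and $K\cap V\leq Rad_{g}V$.

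The first step is to push this decomposition down into $L$. Since $K\leq L$, the modular law gives $L=L\cap M=L\cap(K+V)=K+(L\cap V)$, so $L=K+(L\cap V)$ automatically.

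The second step is to show the sum is direct. Observe that $K\cap(L\cap V)\leq K\cap V\leq Rad_{g}V$. By Lemma \ref{3}$(2)$, $Rad_{g}V\leq Rad_{g}M$. On the other hand, $K\cap(L\cap V)\leq L$, so $K\cap(L\cap V)\leq L\cap Rad_{g}M=0$ by hypothesis. Hence $L=K\oplus(L\cap V)$, exhibiting $K$ as a direct summand of $L$. Since $K$ was arbitrary, $L$ is semisimple.

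For the ``in particular'' statement, take $L=M$: the hypothesis $M\cap Rad_{g}M=Rad_{g}M=0$ is satisfied, and the first part yields that $M$ is semisimple. The argument is essentially an exercise in combining the modular law with the containment $Rad_{g}V\leq Rad_{g}M$, so there is no real obstacle; the only thing to be careful about is invoking $K\leq L$ (rather than just $K\leq M$) at the modular-law step, which is what makes the intersection with $L$ collapse via the hypothesis $L\cap Rad_{g}M=0$.
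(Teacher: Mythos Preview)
Your proof is correct and follows essentially the same approach as the paper: take a g-radical supplement of the given submodule, apply the modular law to intersect with $L$, and use $Rad_{g}V\leq Rad_{g}M$ together with $L\cap Rad_{g}M=0$ to conclude the sum is direct. The only cosmetic differences are your explicit citation of Lemma~\ref{3}(2) and your handling of the ``in particular'' clause, which the paper leaves implicit.
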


\begin{proof}
Let $X\leq L$. Since $M$ is g-radical supplemented, there exists a g-radical
supplement $T$ of $X$ in $M$. Hence $M=X+T$ and $X\cap T\leq Rad_{g}T\leq
Rad_{g}M$. Since $M=X+T$ and $X\leq L$, by Modular Law, $L=L\cap M=L\cap
\left( X+T\right) =X+L\cap T$. Since $X\cap T\leq Rad_{g}M$ and $L\cap
Rad_{g}M=0$, $X\cap L\cap T=L\cap X\cap T\leq L\cap Rad_{g}M=0$. Hence $%
L=X\oplus L\cap T$ and $X$ is a direct summand of $L$.
\end{proof}

\begin{proposition}
Let $M$ be a g-radical supplemented module. Then $M=K\oplus L$ for some
semisimple module $K$ and some module $L$ with essential generalized radical.
\end{proposition}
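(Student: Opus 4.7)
The plan is to construct $K$ via Zorn's Lemma as a submodule of $M$ that is maximal with respect to $K \cap Rad_{g}M = 0$. The family of such submodules contains $0$ and is closed under unions of chains (if $\{C_{i}\}$ is a chain with each $C_{i} \cap Rad_{g}M = 0$, any nonzero element of $\bigcup C_{i}$ lies in some $C_{i}$, so the same is true of $\bigcup C_{i}$), so a maximal such $K$ exists. Lemma \ref{9}, applied to $K \leq M$, immediately gives that $K$ is semisimple.

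To obtain the splitting $M = K \oplus L$, I would invoke g-radical supplementedness to choose a g-radical supplement $L$ of $K$ in $M$. This gives $M = K + L$ together with $K \cap L \leq Rad_{g}L \leq Rad_{g}M$. Combined with the defining property $K \cap Rad_{g}M = 0$, we get $K \cap L = 0$, so the sum is direct.

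The core of the argument is to verify that $Rad_{g}L \trianglelefteq L$. Given any nonzero submodule $N \leq L$, the sum $K + N$ strictly contains $K$, since $K \cap N \leq K \cap L = 0$ and $N \neq 0$. By maximality of $K$, there must exist a nonzero $y \in (K + N) \cap Rad_{g}M$. Writing $y = k + n$ with $k \in K$ and $n \in N$, and letting $\pi : M \to L$ be the projection along the decomposition $M = K \oplus L$, Lemma \ref{3}(4) yields $n = \pi(y) \in \pi(Rad_{g}M) \leq Rad_{g}L$. If $n$ were zero, then $y = k \in K \cap Rad_{g}M = 0$, contradicting $y \neq 0$; hence $0 \neq n \in N \cap Rad_{g}L$, which is exactly the essentiality claim.

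The one subtle step is this last essentiality verification: a generic semisimple summand would not force $Rad_{g}L$ to be essential in its complement. What does the work is the combination of the maximality property of $K$, which guarantees that any proper enlargement of $K$ inside $M$ meets $Rad_{g}M$ nontrivially, together with Lemma \ref{3}(4), which then projects that witness into $Rad_{g}L$.
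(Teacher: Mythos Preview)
Your proof is correct and follows the same overall strategy as the paper: your maximal $K$ with $K\cap Rad_{g}M=0$ is precisely a complement of $Rad_{g}M$ in $M$, and the paper likewise obtains $K$ semisimple via Lemma~\ref{9} and produces $L$ as a g-radical supplement of $K$, deducing $M=K\oplus L$ exactly as you do.

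The only substantive difference is in the verification that $Rad_{g}L\trianglelefteq L$. The paper argues structurally: from the complement property it has $K\oplus Rad_{g}M\trianglelefteq M$, then invokes Lemma~\ref{8} to get $Rad_{g}M=Rad_{g}K\oplus Rad_{g}L$ (hence $K\oplus Rad_{g}M=K\oplus Rad_{g}L$), and finally applies an Anderson--Fuller direct-sum essentiality result to conclude $Rad_{g}L\trianglelefteq L$ from $K\oplus Rad_{g}L\trianglelefteq K\oplus L$. Your argument replaces these two external citations and Lemma~\ref{8} by a direct elementwise computation using only the maximality of $K$ and Lemma~\ref{3}(4) for the projection $\pi$; this is more self-contained and arguably cleaner, while the paper's version makes the role of the complement more visibly structural.
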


\begin{proof}
Let $K$ be a complement of $Rad_{g}M$ in $M.$ Then by $\left[ \text{\ref{1.1}%
},17.6\right] $, $K\oplus Rad_{g}M\trianglelefteq M$. Since $K\cap
Rad_{g}M=0 $, then by Lemma \ref{9}, $K$ is semisimple. Since $M$ is
g-radical supplemented, there exists a g-radical supplement $L$ of $K$ in $M$%
. Hence $M=K+L$ and $K\cap L\leq Rad_{g}L\leq Rad_{g}M$. Then by $K\cap
Rad_{g}M=0$, $K\cap L=0$. Hence $M=K\oplus L.$ Since $M=K\oplus L$, then by
Lemma \ref{8}, $Rad_{g}M=Rad_{g}K\oplus Rad_{g}L$. Hence $K\oplus
Rad_{g}M=K\oplus Rad_{g}L$. Since $K\oplus Rad_{g}L=K\oplus
Rad_{g}M\trianglelefteq M=K\oplus L$, then by $\left[ \text{\ref{1.2},
Proposition 5.20}\right] $, $Rad_{g}L\trianglelefteq L$.
\end{proof}

\begin{proposition}
Let $M$ be an $R-$module and $U\leq M$. the following statements are
equivalent.

$\left( 1\right) $ There is a decomposition $M=X\oplus Y$ with $X\leq U$ and 
$U\cap Y\leq Rad_{g}Y$.

$\left( 2\right) $ There exists an idempotent $e\in End\left( M\right) $
with $e\left( M\right) \leq U$ and $\left( 1-e\right) \left( U\right) \leq
Rad_{g}\left( 1-e\right) \left( M\right) $.

$\left( 3\right) $ There exists a direct summand $X$ of $M$ with $X\leq U$
and $U/X\leq Rad_{g}\left( M/X\right) $.

$\left( 4\right) $ $U$ has a g-radical supplement $Y$ such that $U\cap Y$ is
a direct summand of $U$.
\end{proposition}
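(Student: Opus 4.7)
The plan is to establish the four statements are equivalent by the cyclic chain $(1)\Rightarrow(2)\Rightarrow(3)\Rightarrow(4)\Rightarrow(1)$, since consecutive steps translate between direct-sum language, idempotents in $End(M)$, quotient language, and g-radical supplements, which is the most natural ordering. Throughout I will rely on Lemma \ref{3}, especially parts $(4)$ and $(5)$, to transport $Rad_g$ along homomorphisms.

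For $(1)\Rightarrow(2)$, given $M=X\oplus Y$ with $X\leq U$ and $U\cap Y\leq Rad_gY$, take $e$ to be the projection onto $X$ along $Y$. Then $e(M)=X\leq U$, and for any $u\in U$ the decomposition $u=e(u)+(1-e)(u)$ shows $(1-e)(u)=u-e(u)\in U\cap Y$ since $e(u)\in X\leq U$; hence $(1-e)(U)\leq U\cap Y\leq Rad_gY=Rad_g(1-e)(M)$. For $(2)\Rightarrow(3)$, set $X=e(M)$, which is a direct summand of $M$ with complement $(1-e)(M)$ by idempotency, and $X\leq U$ by hypothesis. The canonical projection $\pi:M\to M/X$ restricts to an $R$-isomorphism $(1-e)(M)\to M/X$, so by Lemma \ref{3}$(4)$ it carries $Rad_g(1-e)(M)$ onto $Rad_g(M/X)$. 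For $u\in U$ we have $\pi(u)=\pi((1-e)(u))$, so $U/X=\pi(U)=\pi((1-e)(U))\leq Rad_g(M/X)$.

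For $(3)\Rightarrow(4)$, write $M=X\oplus X'$ and take $Y=X'$. Because $X\leq U$, the modular law gives $U=X\oplus(U\cap X')$, so $U\cap Y=U\cap X'$ is a direct summand of $U$, and clearly $M=U+Y$. Applying the isomorphism $X'\cong M/X$ from above (again via Lemma \ref{3}$(4)$), the submodule $U\cap X'$ of $X'$ corresponds to $U/X\leq Rad_g(M/X)$, giving $U\cap Y\leq Rad_gY$. Finally, for $(4)\Rightarrow(1)$, write $U=X\oplus(U\cap Y)$ with $X\leq U$: then $X+Y\supseteq X+(U\cap Y)+Y=U+Y=M$, and $X\cap Y\leq X\cap(U\cap Y)=0$, so $M=X\oplus Y$ and the required g-radical condition $U\cap Y\leq Rad_gY$ is already given.

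The main obstacle, in my view, is the step $(2)\Rightarrow(3)$: one must verify that under the isomorphism $(1-e)(M)\cong M/X$ the image $(1-e)(U)$ corresponds not merely to a submodule of $U/X$ but to $U/X$ itself, and this relies crucially on the decomposition $u=e(u)+(1-e)(u)$ with $e(u)\in X\leq U$ so that $(1-e)(u)\in U$. Once this identification is clean, the transfer of $Rad_g$ through $R$-isomorphisms via Lemma \ref{3} makes the remaining work routine; the other three implications are essentially bookkeeping with idempotents and the modular law.
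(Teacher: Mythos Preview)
Your cyclic proof is correct. The implications $(1)\Rightarrow(2)$ and $(4)\Rightarrow(1)$ match the paper's argument essentially verbatim, but your treatment of $(2)\Rightarrow(3)$ and especially $(3)\Rightarrow(4)$ differs. In $(2)\Rightarrow(3)$ the paper first observes that $Y=(1-e)(M)$ is a g-radical supplement of $U$ in $M$ and then invokes Lemma~\ref{6} to push this down to $M/X$, whereas you transport $Rad_g$ directly along the isomorphism $(1-e)(M)\cong M/X$ using Lemma~\ref{3}$(4)$ (applied to both the isomorphism and its inverse). In $(3)\Rightarrow(4)$ the paper carries out an explicit element-level verification: for $t\in U\cap Y$ and $T\trianglelefteq Y$ with $Rt+T=Y$, it checks that $(T+X)/X\trianglelefteq M/X$, uses $R(t+X)\ll_g M/X$ to force $T+X=M$, and then the modular law to get $T=Y$, finally appealing to Lemma~\ref{13}. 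Your route---pulling $U/X\leq Rad_g(M/X)$ back through the isomorphism $Y\cong M/X$ to obtain $U\cap Y\leq Rad_gY$ in one stroke---is considerably shorter and avoids the essential-submodule chase entirely. The trade-off is that your argument leans on the (easy but unstated) fact that an $R$-isomorphism carries $Rad_g$ onto $Rad_g$, which you should perhaps make explicit, while the paper's longer argument stays closer to the definition of $\ll_g$.
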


\begin{proof}
$\left( 1\right) \Rightarrow \left( 2\right) $ For a decomposition $%
M=X\oplus Y$, there exists an idempotent $e\in End\left( M\right) $ with $%
X=e\left( M\right) $ and $Y=\left( 1-e\right) \left( M\right) $. Since $%
e\left( M\right) =X\leq U$, we easily see that $\left( 1-e\right) \left(
U\right) =U\cap \left( 1-e\right) \left( M\right) $. Then by $Y=\left(
1-e\right) \left( M\right) $ and $U\cap Y\leq Rad_{g}Y$, $\left( 1-e\right)
\left( U\right) =U\cap \left( 1-e\right) \left( M\right) =U\cap Y\leq
Rad_{g}Y=Rad_{g}\left( 1-e\right) \left( M\right) $.

$\left( 2\right) \Rightarrow \left( 3\right) $ Let $X=e\left( M\right) $ and 
$Y=\left( 1-e\right) \left( M\right) $. Since $e\in End\left( M\right) $ is
idempotent, we easily see that $M=X\oplus Y$. Then $M=U+Y$. Since $e\left(
M\right) =X\leq U$, we easily see that $\left( 1-e\right) \left( U\right)
=U\cap \left( 1-e\right) \left( M\right) $. Since $M=U+Y$ and $U\cap Y=U\cap
\left( 1-e\right) \left( M\right) =\left( 1-e\right) \left( U\right) \leq
Rad_{g}\left( 1-e\right) \left( M\right) =Rad_{g}Y$, $Y$ is a g-radical
supplement of $U$ in $M$. Then by Lemma \ref{6}, $M/X=\left( Y+X\right) /X$
is a g-radical supplement of $U/X$ in $M/X$. Hence $U/X=\left( U/X\right)
\cap \left( M/X\right) \leq Rad_{g}\left( M/X\right) $.

$\left( 3\right) \Rightarrow \left( 4\right) $ Let $M=X\oplus Y$. Since $%
X\leq U$, $M=U+Y$. Let $t\in U\cap Y$ and $Rt+T=Y$ for an essential
submodule $T$ of $Y$. Let $\left( \left( T+X\right) /X\right) \cap \left(
L/X\right) =0$ for a submodule $L/X$ of $M/X$. Then $\left( L\cap T+X\right)
/X=\left( \left( T+X\right) /X\right) \cap \left( L/X\right) =0$ and $L\cap
T+X=X$. Hence $L\cap T\leq X$ and since $X\cap Y=0$, $L\cap T\cap Y\leq
X\cap Y=0$. Since $L\cap Y\cap T=L\cap T\cap Y=0$ and $T\trianglelefteq Y$, $%
L\cap Y=0$. Since $X\leq L$ and $M=X+Y$, by Modular Law, $L=L\cap M=L\cap
\left( X+Y\right) =X+L\cap Y=X+0=X$. Hence $L/X=0$ and $\left( T+X\right)
/X\trianglelefteq M/X$. Since $Rt+T=Y$, $R(t+X)+\left( T+X\right) /X=\left(
Rt+X\right) /X+\left( T+X\right) /X=\left( Rt+T+X\right) /X=\left(
Y+X\right) /X=M/X$. Since $t\in U$, $t+X\in U/X\leq Rad_{g}\left( M/X\right) 
$ and hence $R\left( t+X\right) \ll _{g}M/X$. Then by $R(t+X)+\left(
T+X\right) /X$ and $\left( T+X\right) /X\trianglelefteq M/X$, $\left(
T+X\right) /X=M/X$ and then $X+T=M$. Since $X+T=M$ and $T\leq Y$, by Modular
Law, $Y=Y\cap M=Y\cap \left( X+T\right) =X\cap Y+T=0+T=T$. Hence $Rt\ll
_{g}Y $ and by Lemma \ref{13}, $Y$ is a g-radical supplement of $U$ in $M$.
Since $M=X\oplus Y$ and $X\leq U$, by Modular Law, $U=U\cap M=U\cap \left(
X\oplus Y\right) =X\oplus U\cap Y$. Hence $U\cap Y$ is a direct summand of $%
U $.

$\left( 4\right) \Rightarrow \left( 1\right) $ Let $U=X\oplus U\cap Y$ for a
submodule $X$ of $U$. Since $Y$ is a g-radical supplement of $U$ in $M$, $%
M=U+Y$ and $U\cap Y\ll _{g}Y$. Hence $M=U+Y=\left( X\oplus U\cap Y\right)
+Y=X\oplus Y$.
\end{proof}

\begin{theorem}
Let $V$ be a g-radical supplement of $U$ in $M.$ If $U$ is a generalized
maximal submodule of $M$, then $U\cap V$ is a unique generalized maximal
submodule of $V$.
\end{theorem}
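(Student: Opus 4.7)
The plan is to read ``generalized maximal submodule'' as a maximal submodule which is also essential, matching the earlier definition of $Rad_{g}$ as the intersection of all maximal essential submodules. Under this reading, I need to check (a) $U\cap V$ is proper and maximal in $V$, (b) $U\cap V$ is essential in $V$, and (c) any maximal essential submodule of $V$ coincides with $U\cap V$.

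For (a), since $M=U+V$, the second isomorphism theorem gives $V/(U\cap V)\cong (U+V)/U=M/U$, which is simple because $U$ is a maximal submodule of $M$. Hence $U\cap V$ is a proper maximal submodule of $V$. For (b), let $N$ be a nonzero submodule of $V$. Since $N\leq V\leq M$ and $U\trianglelefteq M$, we have $N\cap U\neq 0$; as $N\cap U\leq N\leq V$, in fact $N\cap U\leq U\cap V$, so $N\cap(U\cap V)\supseteq N\cap U\neq 0$. Therefore $U\cap V\trianglelefteq V$, which together with (a) shows that $U\cap V$ is a generalized maximal submodule of $V$.

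For uniqueness (c), let $W$ be any generalized maximal submodule of $V$. By the definition of $Rad_{g}V$ as the intersection of all maximal essential submodules of $V$, we have $W\supseteq Rad_{g}V$, and since $V$ is a g-radical supplement of $U$ in $M$, the hypothesis $U\cap V\leq Rad_{g}V$ gives $U\cap V\leq W$. But $W$ is proper in $V$ and $U\cap V$ is maximal in $V$ by (a), forcing $W=U\cap V$. The main potential obstacle is the essentiality step in (b): one must remember to use the essentiality of $U$ in $M$ (not merely the maximality) and to exploit the inclusion $V\leq M$ to transport a nonzero intersection from $M$ back into $V$; everything else is routine bookkeeping with the hypotheses and the description of $Rad_{g}V$.
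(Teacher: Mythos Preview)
Your proof is correct and follows the same strategy as the paper: use the isomorphism $V/(U\cap V)\cong M/U$ to see that $U\cap V$ is a generalized maximal submodule of $V$, then combine $Rad_{g}V\leq U\cap V$ with the hypothesis $U\cap V\leq Rad_{g}V$ to force $U\cap V=Rad_{g}V$ and conclude uniqueness. The one place you are more careful than the paper is step~(b), where you explicitly verify that $U\cap V\trianglelefteq V$ from $U\trianglelefteq M$; the paper simply asserts that the isomorphism transfers the ``generalized maximal'' property, leaving the essentiality check implicit.
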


\begin{proof}
Since $U$ is a generalized maximal submodule of $M$ and $V/\left( U\cap
V\right) \simeq \left( V+U\right) /U=M/U$, $U\cap V$ is a generalized
maximal submodule of $V$. Hence $Rad_{g}V\leq U\cap V$ and since $U\cap
V\leq Rad_{g}V$, $Rad_{g}V=U\cap V$. Thus $U\cap V$ is a unique generalized
maximal submodule of $V$.
\end{proof}

\begin{definition}
Let $M$ be an $R-$module. If every proper essential submodule of $M$ is
generalized small in $M$ or $M$ has no proper essential submodules, then $M$
is called a generalized hollow module.
\end{definition}

Clearly we see that every hollow module is generalized hollow.

\begin{definition}
Let $M$ be an $R-$module. If $M$ has a large proper essential submodule
which contain all essential submodules of $M$ or $M$ has no proper essential
submodules, then $M$ is called a generalized local module.
\end{definition}

Clearly we see that every local module is generalized local.

\begin{proposition}
Generalized hollow and generalized local modules are g-supplemented, so are
g-radical supplemented.
\end{proposition}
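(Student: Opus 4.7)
My plan is to prove the result first for generalized hollow modules, then reduce the generalized local case to the hollow case, and finally invoke the observation already noted in the paper that every g-supplemented module is g-radical supplemented.

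For generalized hollow modules the key step is a case analysis via a complement. Given $U\leq M$, if $U=M$ I take $V=0$. Otherwise, let $K$ be a complement of $U$ in $M$, so that $U+K\trianglelefteq M$. If $U+K=M$, then $U\cap K=0$ and $K$ is a g-supplement of $U$, since $M=U+K$ and $U\cap K=0\ll_{g}K$. If instead $U+K\neq M$, then $U+K$ is a proper essential submodule of $M$, so by hypothesis $U+K\ll_{g}M$; Lemma~\ref{1}(1) then gives $U\ll_{g}M$, and $V=M$ works as a g-supplement of $U$ (we have $M=U+M$ and $U\cap M=U\ll_{g}M$).

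For the generalized local case, I would reduce to the hollow case by showing that the largest proper essential submodule $L$ is itself generalized small in $M$. Indeed, if $T\trianglelefteq M$ with $M=L+T$ and $T\neq M$, then $T$ is a proper essential submodule, hence $T\leq L$ by the definition of generalized local, forcing $M=L+T=L$, a contradiction. So $L\ll_{g}M$, and every proper essential submodule of $M$, being contained in $L$, is generalized small by Lemma~\ref{1}(1). Thus every generalized local module is generalized hollow, and the first part applies.

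The main obstacle is the dichotomy in the hollow case: one must recognize that the complement construction $U+K\trianglelefteq M$ forces either $U+K=M$ (yielding a direct summand, with $K$ as g-supplement) or $U+K$ proper essential (so that the hollow hypothesis fires, making $M$ itself a g-supplement). Everything else is routine, and the final passage from g-supplemented to g-radical supplemented is immediate from Lemma~\ref{2}, since $U\cap V\ll_{g}V$ implies $U\cap V\leq Rad_{g}V$.
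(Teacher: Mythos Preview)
Your argument is correct. The paper's own proof is the single line ``Clear from definitions,'' so there is no detailed approach to compare against; what you have done is supply the details that the paper omits. Your complement dichotomy in the hollow case (either $U\oplus K=M$, or $U+K$ is proper essential and hence $U\ll_g M$) is exactly the kind of case split one needs, and your reduction of the local case to the hollow case is the same argument the paper itself uses in the $(\Longleftarrow)$ direction of the very next proposition. One minor remark: the fact that $U+K\trianglelefteq M$ for a complement $K$ of $U$ is standard (it appears, e.g., as \cite{1.1}, 17.6, which the paper already invokes elsewhere), so you may wish to cite it rather than assert it.
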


\begin{proof}
Clear from definitions.
\end{proof}

\begin{proposition}
Let $M$ be an $R-$module and $Rad_{g}M\neq M$. Then $M$ is generalized
hollow if and only if $M$ is generalized local.
\end{proposition}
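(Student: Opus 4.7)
The plan is to split into cases according to whether $M$ possesses any proper essential submodule. If it does not, both the generalized hollow and generalized local conditions hold trivially via the alternative clause in their definitions, so the equivalence is immediate. Assume henceforth that $M$ has at least one proper essential submodule.

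For the direction $(\Rightarrow)$, assuming $M$ is generalized hollow, I propose $Rad_{g}M$ itself as the largest proper essential submodule. By hypothesis every proper essential submodule is generalized small, hence contained in $Rad_{g}M=\sum_{L\ll_{g}M}L$ by Lemma \ref{2}; in particular $Rad_{g}M$ contains at least one proper essential submodule, and since essentiality is inherited by overmodules inside $M$, this forces $Rad_{g}M$ itself to be essential. Combined with the standing hypothesis $Rad_{g}M\neq M$, this exhibits $Rad_{g}M$ as a proper essential submodule of $M$ that contains every proper essential submodule, which is exactly the condition for $M$ to be generalized local.

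For the converse $(\Leftarrow)$, let $N$ denote the largest proper essential submodule guaranteed by the generalized local hypothesis, and let $E$ be any proper essential submodule of $M$. To establish $E\ll_{g}M$, I would take $T\trianglelefteq M$ with $E+T=M$ and argue by contradiction: if $T$ were proper, then $T\leq N$, and also $E\leq N$, whence $M=E+T\leq N$, contradicting $N\neq M$. Hence $T=M$, which proves $E\ll_{g}M$, and so $M$ is generalized hollow.

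The main, rather mild, subtlety is verifying essentiality of $Rad_{g}M$ in the forward direction, which is why the initial case split is required, and this is precisely where the standing hypothesis $Rad_{g}M\neq M$ is invoked: it guarantees that the candidate largest proper essential submodule is genuinely \emph{proper}, so that the generalized local definition is actually satisfied rather than vacuous.
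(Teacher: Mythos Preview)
Your proof is correct and follows essentially the same route as the paper's: in both directions you and the paper identify $Rad_{g}M$ (respectively the given largest proper essential submodule) and argue exactly as you do. Your version is slightly more careful than the paper's in two places---you make the case split on the existence of a proper essential submodule explicit, and you spell out why $Rad_{g}M$ is itself essential (via containment of an essential submodule)---but these are elaborations of the same argument rather than a different approach.
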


\begin{proof}
$\left( \Longrightarrow \right) $ Let $M$ be generalized hollow and let $L$
be a proper essential submodule of $M$. Then $L\ll _{g}M$ and by Lemma \ref%
{2}, $L\leq Rad_{g}M$. Thus $Rad_{g}M$ is a proper essential submodule of $M$
which contain all proper essential submodules of $M$.

$\left( \Longleftarrow \right) $ Let $M$ be a generalized local module, $T$
be a large essential submodule of $M$ and $L$ be a proper essential
submodule of $M$. Let $L+S=M$ with $S\trianglelefteq M$. If $S\neq M$, then $%
L+S\leq T\neq M.$ Thus $S=M$ and $L\ll _{g}M.$
\end{proof}

\begin{example}
\label{14}Consider the $%
\mathbb{Z}
-$module $%
\mathbb{Q}
.$ Since $Rad_{g}%
\mathbb{Q}
=Rad%
\mathbb{Q}
=%
\mathbb{Q}
$, $_{%
\mathbb{Z}
}%
\mathbb{Q}
$ is g-radical supplemented. But, since $_{%
\mathbb{Z}
}%
\mathbb{Q}
$ is not supplemented and every nonzero submodule of $_{%
\mathbb{Z}
}%
\mathbb{Q}
$ is essential in $_{%
\mathbb{Z}
}%
\mathbb{Q}
$, $_{%
\mathbb{Z}
}%
\mathbb{Q}
$ is not g-supplemented.
\end{example}

\begin{example}
\label{15}Consider the $%
\mathbb{Z}
-$module $%
\mathbb{Q}
\oplus 
\mathbb{Z}
_{p^{2}}$ for a prime $p$. It is easy to check that $Rad_{g}%
\mathbb{Z}
_{p^{2}}\neq 
\mathbb{Z}
_{p^{2}}$. By Lemma \ref{8}, $Rad_{g}\left( 
\mathbb{Q}
\oplus 
\mathbb{Z}
_{p^{2}}\right) =Rad_{g}%
\mathbb{Q}
\oplus Rad_{g}%
\mathbb{Z}
_{p^{2}}\neq 
\mathbb{Q}
\oplus 
\mathbb{Z}
_{p^{2}}$. Since $%
\mathbb{Q}
$ and $%
\mathbb{Z}
_{p^{2}}$ are g-radical supplemented, by Lemma \ref{5}, $%
\mathbb{Q}
\oplus 
\mathbb{Z}
_{p^{2}}$ is g-radical supplemented. But $%
\mathbb{Q}
\oplus 
\mathbb{Z}
_{p^{2}}$ is not g-supplemented.
\end{example}

$\bigskip $


\begin{thebibliography}{9}
\bibitem{} \label{1.2}F. W. Anderson and K. R. Fuller, \textit{Rings and
Categories of Modules}, Springer-Verlag, New York, 1974.

\bibitem{} F. Kasch, \textit{Modules and Rings}, London New York, 1982.

\bibitem{} \label{1.3}B. Ko\c{s}ar, C. Nebiyev and N. S\"{o}kmez,
G-Supplemented Modules,\textit{\ Ukrainian Mathematical Journal}, 67 No.6,
861-864 (2015).

\bibitem{} J. Clark, C. Lomp, N. Vanaja, R. Wisbauer, \textit{Lifting
Modules. Supplements and Projectivity In Module Theory}, Frontiers in
Mathematics, Birkhauser, Basel, 2006.

\bibitem{} \label{1.4}N. S\"{o}kmez, B. Ko\textit{\c{s}}ar, C. Nebiyev,
Genelle\c{s}tirilmi\c{s} K\"{u}\c{c}\"{u}k Alt Mod\"{u}ller, \textit{XXIII.
Ulusal Matematik Sempozyumu}, \textit{Erciyes \"{U}niversitesi}, \textit{%
Kayseri}, (2010).

\bibitem{} W. Xue, Characterizations of Semiperfect and Perfect Rings,%
\textit{\ Publications Matematiques}, 40, 115-125 (1996).

\bibitem{} Y. Wang and N. Ding, Generalized Supplemented Modules, \textit{%
Taiwanese Journal of Mathematics}, 10 No.6, 1589-1601 (2006).

\bibitem{} \label{1.1}R. Wisbauer, \textit{Foundations of Module and Ring
Theory}, Gordon and Breach, Philadelphia, 1991.

\bibitem{} H. Z\"{o}schinger, Komplementierte Moduln \"{u}ber
Dedekindringen, \textit{Journal of Algebra}, (29):42--56 (1974).
\end{thebibliography}
\end{document}